\newtheorem{theorem}{\sc Theorem}[section]
\newtheorem{lemma}[theorem]{\sc Lemma}
\newtheorem{problem}[theorem]{\sc Problem}
\title[On just-infiniteness of groups and $C^*$-algebras]{On just-infiniteness of locally finite groups and their $C^*$-algebras}
\author{V. Belyaev}
\address{Institute of Mathematics and Mechanics\\ S. Kovalevskaja 16, Ekaterinburg, Russia}
\email{v.v.belyaev@list.ru}
\author{R. Grigorchuk}
\address{Department of Mathematics\\ Texas A\&M University\\ College Station, TX, USA}
\email{grigorch@math.tamu.edu}
\author{P. Shumyatsky}
\address{Department of Mathematics\\ University of Bras\'\i lia\\ 70910 Bras\'\i lia DF
\\ Brazil}
\email{pavel@unb.br}
\thanks{The second author was supported by NSA grant H98230-15-1-0328.
The third author was supported by FAPDF and CNPq}
\dedicatory{For Alex Lubotzky on his 60th birthday}
\subjclass[2010]{46L05,20F50,16S34}
\begin{document}
\begin{abstract} We give a construction of a family of locally finite residually finite groups with just-infinite $C^*$-algebra. This answers a question from \cite{gri3}. Additionally, we show that residually finite groups of finite exponent are never just-infinite.
\end{abstract}
\maketitle

\section{Introduction} A group is called just-infinite if it is infinite but every proper quotient is finite. Any infinite finitely generated group has just-infinite quotient. Therefore any question about existence of an infinite finitely generated group with certain property which is preserved under homomorphic images can be reduced to a similar question in the class of just-infinite groups. For instance, we do not know if there exists an infinite finitely generated group in which every element represents a value of the $n$th Engel word $[x,y,\dots,y]$. Clearly, if such groups exist then some of them are just-infinite. This observation is one of the numerous that justify the importance of just-infinite groups.

In \cite[Theorem 2]{gri1} the second author showed that the class of just-infinite groups naturally splits into three subclasses: just-infinite groups of branch type, just-infinite groups containing a subgroup of finite index that is a direct product of finitely many copies of the same group and the latter is either simple or hereditary just-infinite group.

A branch group is a group $G$ that has faithful level transitive action by automorphisms on a spherically homogeneous rooted tree $T_{\bar m}$, given by a sequence ${\bar m}=\{m_n\}_{n=0}^\infty$ of positive integers $m_n\geq2$ (called branch index) with the property that for any $n$ the index of rigid stabilizer $rist_G(n)$ in $G$ is finite \cite{gri1}. A group $G$ is hereditary just-infinite if it is infinite residually finite and every subgroup of finite index (including $G$ itself) is just-infinite.

Thus, basically, the study of just-infinite groups can be reduced to study of branch, hereditary just-infinite and infinite simple groups. While the latter two classes in the above list consist of just-infinite groups, branch groups are not necessarily just-infinite (for instance, $Aut\, T_m$ is branch but not just-infinite) but every proper quotient of a branch group is virtually abelian \cite[Theorem 3]{gri1}.

The above trichotomy for just-infinite groups was deduced from the dichotomy presented in \cite{wi2} where the notions of basal subgroup and the structural subgroup lattice were the main tools.

The concept of just-infiniteness can be defined for other objects in mathematics: for algebras of various types (associative, Lie etc). The article \cite{gri3} is devoted to the study of just-infinite (dimensional) $C^*$-algebras. The main result of \cite{gri3} tells us of tricotomy of such $C^*$-algebras. It turns out that the class of just-infinite $C^*$-algebras splits into three subclasses according to the structure of the primitive ideal space $Prim(A)$ which can consist of a single point, to be finite set of cardinality $\geq 2$, or to be infinite countable. The corresponding cases are marked in \cite[Theorem 3.10]{gri3} as cases $(\alpha),(\beta)$ and $(\gamma)$. The class $(\gamma)$ (most complicated of all) consists of residually finite-dimensional algebras while algebras in $(\alpha)$ and $(\beta)$ are not residually finite-dimensional.

All classes $(\alpha),(\beta)$ and $(\gamma)$ are non-empty and while examples of algebras in the classes $(\alpha)$ and $(\beta)$ are straightforward, the construction of algebras in the class $(\gamma)$ requires some work. The examples that are presented in \cite{gri3} are obtained using special type of Bratteli diagrams.

The property of an algebra to be residually finite-dimensional is analogous to that of residual finiteness in groups. In \cite{gri3} a question about existence of algebras in the class $(\gamma)$ that come from groups has been raised. First of all, the interest is focused on the full $C^*$-algebra $C^*(G)$ and on the reduced $C^*$-algebras $C_r^*(G)$ generated by the left (or, what is essentially the same, right) regular representation of a group. But examples of the type $C^*_\rho(G)$ of algebras generated by a unitary representation $\rho$ of $G$ are also interesting. There is belief that the Koopman representation $\pi$ associated with the action of the Grigorchuk group $\mathcal G$ of intermediate growth constructed in \cite{gri4} and studied in \cite{gri5} and many other articles on the boundary of the rooted binary tree generates $C^*$-algebra $C^*_\pi(\mathcal G)$ which belongs to the class $(\gamma)$.

On the other hand, in \cite{gri3} an attention is paid to the case of locally finite groups and their $C^*$-algebras (in this case $C^*(G)=C^*_r(G)$ because of amenability of $G$). The question 6.10 in \cite{gri3} is as follows.

Does there exist an infinite, residually finite, locally finite group $G$ such that $C^*(G)$ is just-infinite?

The results obtained in \cite{gri3} show that such a group has to be just-infinite and moreover hereditary just-infinite as for branch groups $C^*(G)$ is never just-infinite \cite[Theorem 7.10]{gri3}.

The goal of the present paper is to answer positively the above question and to prove some other related results. In the next section we construct uncountably many residually finite, locally finite groups $G$ for which the algebra $C^*(G)$ is just-infinite (cf Theorem \ref{gla}).

Recall that a group is said to be of finite exponent $n$ if all of its cyclic subgroups have finite order dividing $n$. It is easy to see that the groups $G$ constructed in Section 2 are all of infinite exponent. In Section 3 we show that residually finite groups of finite exponent are never just-infinite. Quite possibly, this is a part of a more general phenomenon. In particular, the following problem seems natural.

\begin{problem} Does there exist an infinite, residually finite, locally finite group $G$ satisfying a nontrivial identity such that $C^*(G)$ is just-infinite?
\end{problem}

\section{Construction of groups $G$ with $C^*(G)$ just-infinite}
Throughout the article our notation is standard. If $G$ is a group and $x,y\in G$, we write $[x,y]$ for $x^{-1}y^{-1}xy$ and $x^y=y^{-1}xy$. Suppose $a$ is an automorphism of $G$. We write $x^a$ for the image of $x$ under $a$.  If $H\leq G$, the subgroup of $G$ generated by elements of the form $h^{-a}h$ with $h\in H$ is denoted by $[H,a]$. It is well-known that the subgroup $[G,a]$ is always an $a$-invariant normal subgroup in $G$. We also write $C_G(a)$ for the fixed point subgroup of the automorphism $a$ in $G$. When convenient, we identify elements of the group $G$ with the corresponding inner automorphisms of $G$.

 We will require the following well-known lemmas.
\begin{lemma}\label{zuzu} Let $G$ be a metabelian group and $a\in G$. Suppose that $B$ is an abelian subgroup in $G$ containing the commutator subgroup $G'$. Then $[B,a]$ is normal in $G$.
\end{lemma}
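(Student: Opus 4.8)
The plan is to show directly that $[B,a]$ is invariant under conjugation by an arbitrary element $g\in G$. First I would record two observations that follow purely from the hypothesis $G'\le B$. Since every commutator lies in $G'\le B$, both $[a,g]$ and $[b,g]$ belong to $B$ for each $b\in B$; hence $a^{g}=a[a,g]$ differs from $a$ only by an element of $B$, while $b^{g}=b[b,g]$ is a product of two elements of $B$ and so again lies in $B$. Note also that $[B,a]\le G'\le B$, so that every generator of $[B,a]$ sits inside the abelian group $B$. (I would remark in passing that the metabelian hypothesis is in fact automatic: $G'\le B$ with $B$ abelian already forces $G'$ to be abelian, so the operative assumptions are simply that $B$ be abelian and contain $G'$.)

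It then suffices to take a single generator $[b,a]$ with $b\in B$ and verify that $[b,a]^{g}\in[B,a]$. Using $[b,a]^{g}=[b^{g},a^{g}]$ and setting $w=[a,g]\in B$, so that $a^{g}=aw$, I would expand by the standard identity $[x,yz]=[x,z][x,y]^{z}$ to get $[b^{g},aw]=[b^{g},w]\,[b^{g},a]^{w}$. The first factor $[b^{g},w]$ is trivial because both $b^{g}$ and $w$ lie in the abelian subgroup $B$; and in the second factor conjugation by $w\in B$ fixes $[b^{g},a]\in G'\le B$, again by commutativity of $B$. Thus $[b,a]^{g}=[b^{g},a]$, which is itself a generator of $[B,a]$ since $b^{g}\in B$. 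As $g$ was arbitrary, this proves $[B,a]\trianglelefteq G$.

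The subtle point to watch, and what I expect to be the crux, is that $a$ need not belong to $B$ and $\langle B,a\rangle$ need not be all of $G$; one therefore cannot simply check invariance under conjugation by $B$ and by $a$ and invoke a generating set. The whole argument hinges on the fact that conjugation by an arbitrary $g$ moves $a$ only within its coset $aB$, with the error term $[a,g]$ landing in the \emph{abelian} subgroup $B$ — and it is precisely the commutativity of $B$ that allows this extra factor to be absorbed, reducing $[b^{g},a^{g}]$ back to the honest generator $[b^{g},a]$.
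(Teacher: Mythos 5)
Your proof is correct and is essentially the paper's own argument: the paper likewise notes that $B$ is normal (as $G'\le B$), writes $a^x=ay$ with $y\in G'$, and uses $[B,y]=1$ to get $[B,a]^x=[B,a^x]=[B,ay]=[B,a]$, which is exactly your computation with $w=[a,g]$ carried out generator by generator. Your side remark that the metabelian hypothesis is automatic (since $G'\le B$ with $B$ abelian forces $G''=1$) is also correct.
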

\begin{proof} Since $B$ contains $G'$, it follows that $B$ is normal in $G$. Let $x\in G$. Write $a^x=ay$ with $y\in G'$. Taking into account that $[B,y]=1$, we obtain $[B,a]^x=[B,a^x]=[B,ay]=[B,a]$. Thus, an arbitrary element $x\in G$ normalizes $[B,a]$, as required.
\end{proof}

Suppose that a finite group $H$ is a Cartesian product of nonabelian simple groups $H_i$. It is well-known that any normal subgroup of $H$ can be written as a product of some of the factors $H_i$. Therefore any automorphism of $H$ permutes the simple factors. These facts are used in the following lemma.
\begin{lemma}\label{zok} Let $H=H_1\times\dots\times H_s$ be a Cartesian product of nonabelian simple groups $H_i$ and let $a$ be an automorphism of $H$. Suppose that none of the simple factors $H_i$ lies in $C_H(a)$. Then $H=[H,a]$.
\end{lemma}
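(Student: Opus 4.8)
The plan is to exploit the structural facts recalled just before the statement. Since $[H,a]$ is a normal subgroup of $H$, it must be a product of some of the simple factors, say $[H,a]=\prod_{i\in I}H_i$ for some $I\subseteq\{1,\dots,s\}$; and since $[H,a]$ is $a$-invariant while $a$ permutes the factors, the index set $I$ is invariant under the permutation $\sigma$ of $\{1,\dots,s\}$ induced by $a$ (so that $H_i^a=H_{\sigma(i)}$). The goal then reduces to showing $I=\{1,\dots,s\}$, i.e. that every factor $H_j$ is contained in $[H,a]$. I would establish $H_j\subseteq[H,a]$ for each $j$ by distinguishing whether $\sigma$ fixes $j$ or moves it.

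If $\sigma(j)=j$, then $H_j$ is $a$-invariant and $a$ restricts to an automorphism of the simple group $H_j$. For $h\in H_j$ the element $h^{-a}h$ again lies in $H_j$, so the generators of $[H_j,a]$ are among those of $[H,a]$, whence $[H_j,a]\subseteq[H,a]$. By the general fact quoted above, $[H_j,a]$ is a normal subgroup of the simple group $H_j$, hence either trivial or all of $H_j$; it is trivial precisely when $a$ fixes $H_j$ pointwise, i.e. when $H_j\leq C_H(a)$, which the hypothesis excludes. Therefore $[H_j,a]=H_j$ and $H_j\subseteq[H,a]$.

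If $\sigma(j)\neq j$, the argument is even more direct and does not use the hypothesis (which is automatic here, since $a$ maps $H_j$ onto a different factor). Taking $h\in H_j$, its image $h^a$ lies in $H_{\sigma(j)}$, which meets $H_j$ trivially; consequently the $H_j$-component of $h^{-a}h=(h^a)^{-1}h$ equals $h$ itself. Thus the projection of $[H,a]$ onto its $j$-th factor is all of $H_j$, which is incompatible with $j\notin I$, since in that case the projection would be trivial. Hence $j\in I$ and $H_j\subseteq[H,a]$.

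Combining the two cases yields $I=\{1,\dots,s\}$ and therefore $[H,a]=H$, as required. I expect the only (modest) subtlety to lie in the moved-factor case: one must use that $a$ permutes the factors as whole blocks, so that for $h$ supported on $H_j$ the ``error term'' $(h^a)^{-1}$ lands in the distinct block $H_{\sigma(j)}$ and cannot cancel the contribution of $h$ in $H_j$. This is exactly what forces the entire factor into $[H,a]$ and explains why the hypothesis is needed only for the $\sigma$-fixed factors.
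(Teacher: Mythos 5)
Your proof is correct, but it takes a genuinely different route from the paper's. The paper argues by induction on $s$: it first reduces to the case where $a$ permutes the factors transitively, and then derives a contradiction from $[H,a]\neq H$, since writing $[H,a]=H_1\times\dots\times H_t$ with $t<s$ makes both this product and the complementary product $H_{t+1}\times\dots\times H_s$ $a$-invariant, which is incompatible with transitivity. You avoid the induction entirely: after the same initial observation (that $[H,a]$ is an $a$-invariant normal subgroup, hence equal to $\prod_{i\in I}H_i$ for a $\sigma$-invariant index set $I$), you show directly that every index $j$ lies in $I$, splitting into the case $\sigma(j)=j$, where simplicity of $H_j$ together with the hypothesis $H_j\not\leq C_H(a)$ forces $[H_j,a]=H_j$, and the case $\sigma(j)\neq j$, where the component argument (the $H_j$-component of $h^{-a}h$ is $h$ itself) forces $j\in I$ with no use of the hypothesis. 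What your version buys is transparency about where the hypothesis enters: only for the factors fixed as blocks by $a$ (in the paper this is hidden in the base of the induction, the orbits of length one), and it also handles explicitly the degenerate possibility $[H,a]=1$, which in the paper's transitive case ($t=0$) is not excluded by transitivity alone but needs the centralizer hypothesis. What the paper's version buys is brevity: once the transitive reduction is accepted, the contradiction is a one-line consequence of the orbit structure, whereas your argument requires verifying two separate cases.
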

\begin{proof} By induction on $s$ we can assume that $a$ transitively permutes the simple factors  $H_i$. Suppose that the lemma is false and $[H,a]\neq H$. Since any normal subgroup of $H$ can be written as a product of some of the factors $H_i$, we can assume that $[H,a]=H_1\times\dots\times H_t$ for some $t<s$. Then it is clear that the subgroup $H_0=H_{t+1}\times\dots\times H_s$ is $a$-invariant and so $H=[H,a]\times H_0$ with both subgroups $[H,a]$ and $H_0$ being $a$-invariant. However, this contradicts the assumption that the automorphism $a$ transitively permutes the factors $H_i$.
\end{proof}

We write $X\wr Y$ for the wreath product with the active group $Y$ and the passive group $X$. Thus, the base subgroup $B$ of $X\wr Y$ is the direct product of isomorphic copies of the group $X$. If $X$ and $Y$ are finite groups with $X$ being simple, then $B$ is a unique minimal normal subgroup in $X\wr Y$. The minimality follows immediately from the fact that $Y$ transitively permutes the simple factors of $B$. The uniqueness can be deduced by observing that any other minimal normal subgroup of $X\wr Y$ centralizes $B$ while the definition of $X\wr Y$ implies that the centralizer of $B$ is trivial.

Let $S_1,S_2,\dots$ be a sequence of nonabelian finite simple groups. Here we do not assume that the groups $S_i$ are pairwise non-isomorphic. In fact, they can be all isomorphic.

Set $G_1=S_1$ and $G_{i+1}=S_{i+1}\wr G_i$ for $i=1,2,\dots$. In the natural way, $G_i$ will be identified with the active group of $G_{i+1}$. Hence, for each $i$ there is a natural embedding of $G_i$ into $G_{i+1}$. Denote by $M_i$ the unique minimal normal subgroup of $G_{i+1}$. Thus, $M_i$ is a direct product of isomorphic copies of $S_{i+1}$ and $G_i$ regularly permutes the simple factors.
\begin{lemma}\label{lala} For each $i$ we have
 \begin{itemize} \item[(a)] The group $G_{i+1}$ has precisely $i$ proper normal nontrivial subgroups. These are the subgroups of the form $\prod_{j=k}^iM_j$ with $1\leq k\leq i$. 
\item[(b)] Each normal subgroup $\prod_{j=k}^iM_j$ is complemented in  $G_{i+1}$ by the subgroup $G_k$.
\item[(c)] If $N$ is a normal subgroup of $G_{i+1}$ and $x\in G_{i+1}\setminus N$, then $[N,x]=N$.

\end{itemize}
\end{lemma}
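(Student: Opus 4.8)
The plan is to prove all three statements simultaneously by induction on $i$, exploiting the semidirect decomposition $G_{i+1}=M_i\rtimes G_i$ together with the facts that $M_i$ is the \emph{unique minimal} normal subgroup of $G_{i+1}$, that its centralizer in $G_{i+1}$ is trivial, and that $G_i$ permutes the simple factors of $M_i$ regularly. For the base case $i=1$ we have $G_2=S_2\wr S_1$ with base group $M_1$. Since $M_1$ is minimal normal and $C_{G_2}(M_1)=1$, any nontrivial normal subgroup must contain $M_1$; as $G_2/M_1\cong S_1$ is simple, the only proper nontrivial normal subgroup is $M_1$ itself, giving $(a)$. Part $(b)$ is the defining decomposition $G_2=M_1\rtimes S_1$, and $(c)$ follows from the engine observation described below.

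For the inductive step of $(a)$, let $N$ be a proper nontrivial normal subgroup of $G_{i+1}$. Because $M_i$ is minimal normal, $N\cap M_i$ is either $1$ or $M_i$. In the first case $[N,M_i]\le N\cap M_i=1$, so $N$ lies in the trivial centralizer of $M_i$, a contradiction; hence $M_i\le N$. Then $N/M_i$ is a normal subgroup of $G_{i+1}/M_i\cong G_i$, so by the inductive hypothesis $(a)$ it equals $\prod_{j=k}^{i-1}M_j$ for some $k$, and lifting gives $N=\prod_{j=k}^{i}M_j$; running $k$ over $1\le k\le i$ produces exactly $i$ such subgroups. Part $(b)$ then follows by iterating the semidirect product: when $k=i$ it is the top decomposition $G_{i+1}=M_i\rtimes G_i$, and for $k\le i-1$ one combines this with the inductive complement $G_i=(\prod_{j=k}^{i-1}M_j)\rtimes G_k$ to read off $G_{i+1}=(\prod_{j=k}^{i}M_j)\rtimes G_k$, checking triviality of the intersection and surjectivity of the product.

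The heart of the argument is $(c)$, and its engine is the claim that $[M_i,x]=M_i$ for every $x\notin M_i$. Writing $x=mg$ with $m\in M_i$ and $g\in G_i$, the hypothesis $x\notin M_i$ means $g\ne1$; conjugation by $m$ fixes each simple factor setwise, so conjugation by $x$ induces an automorphism $a$ of $M_i$ whose underlying permutation of the factors is the permutation induced by $g$ in the regular action, which is fixed-point-free since $g\ne1$. Thus $a$ carries each factor to a \emph{different} factor, so no factor can lie in $C_{M_i}(a)$, and Lemma \ref{zok} gives $[M_i,a]=M_i$, i.e. $[M_i,x]=M_i$. Now take $N=\prod_{j=k}^{i}M_j$ and $x\notin N$. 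Since $M_i\le N$ we have $x\notin M_i$, whence $M_i=[M_i,x]\le[N,x]$. On the other hand, reducing modulo $M_i$ and noting $\bar x\notin\bar N$ (because the full preimage of $\bar N$ is $N$), the inductive hypothesis $(c)$ applied in $G_i$ yields $[\bar N,\bar x]=\bar N$, that is $[N,x]M_i=N$. Combining the two facts gives $[N,x]=N$; the degenerate cases $N=1$ and $N=G_{i+1}$ are trivial or vacuous.

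The step I expect to be the main obstacle is precisely the commutator identity $(c)$. Parts $(a)$ and $(b)$ are essentially structural bookkeeping resting on the minimality of $M_i$ and the iterated semidirect decomposition, whereas $(c)$ requires the observation that the condition $x\notin N$ forces the active component of $x$ to be nontrivial, which is exactly the input needed to invoke Lemma \ref{zok} and recover the entire bottom layer $M_i$ inside $[N,x]$. Once $M_i\le[N,x]$ is secured, the reduction modulo $M_i$ together with the inductive form of $(c)$ finishes the argument with no further computation.
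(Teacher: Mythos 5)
Your proof is correct and follows essentially the same route as the paper: parts (a) and (b) by induction on $i$ via the unique minimal normal subgroup $M_i$ and the identification $G_{i+1}/M_i\cong G_i$, and part (c) by first establishing $[M_i,x]=M_i$ through Lemma \ref{zok} and then reducing modulo $M_i$ and invoking the inductive hypothesis. If anything, you supply details the paper leaves implicit, notably why the regular action forces conjugation by $x\notin M_i$ to move every simple factor, so that Lemma \ref{zok} applies.
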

\begin{proof} Both claims (a) and (b) can be obtained by induction on $i$, the case where $i\leq 1$ being obvious. Suppose that $i\geq 2$ and let $N$ be a proper normal nontrivial subgroup in $G_{i+1}$. Since $M_i$ is the unique minimal normal subgroup, it follows that $M_i\leq N$. On the other hand, $G_{i+1}/M_i\cong G_{i}$ and by induction the claims (a) and (b) follow. Let us prove Part (c). Let $N$ be a normal subgroup in $G_{i+1}$ and choose $x\not\in N$. By Part (a) we can write $N=\prod_{j=k}^iM_j$ for some $k\leq i$. We use the backward induction on $k$. If $k=i$ and $N=M_i$, the result is straightforward from Lemma \ref{zok}. Suppose that $k\leq i-1$. We already know that $[M_i,x]=M_i$. Passing to the quotient $G_{i+1}/M_i$ and using induction we conclude that $N=[N,x]M_i$. Taking into account that $[M_i,x]=M_i$ we deduce that $[N,x]=N$, as required.
\end{proof}
Let $G$ be the union (direct limit) of the infinite sequence $$G_1\leq G_2\leq\dots.$$ 
For each $i$ we set $T_i=\langle M_j;j\geq i\rangle$, the subgroup generated by all $M_j$ with $j\geq i$. It is clear that the subgroups $T_i$ are normal in $G$. Moreover it follows from Lemma \ref{lala} that for any $i$ the group $G$ is a semidirect product of $T_i$ and $G_i$. Further, all subgroups $T_i$ are normal of finite index and for each normal subgroup $N$ in $G$ there exists $i$ such that $N=T_i$. Indeed, let $N$ be a normal subgroup of $G$ and let $i$ be the minimal index such that $N\cap G_{i+1}\neq1$. Since $M_i$ is the unique minimal normal subgroup in $G_{i+1}$, it follows that $N\cap G_{i+1}=M_i$. Lemma \ref{lala} shows that whenever $s\geq i+1$ the group $G_s$ has precisely one normal subgroup containing $M_i$ and not containing $M_{i-1}$. This is $\prod_{j=i}^{s-1}M_j$. So the intersection $N\cap G_s$ is precisely the product $\prod_{j=i}^{s-1}M_j$. This of course implies that $N=T_i$.

It follows that $G$ is a locally finite residually finite just-infinite group. Let us show that $G$ is a hereditary just-infinite group. Choose a subgroup $H$ of finite index in $G$. Let $N$ be a nontrivial normal subgroup of $H$. The subgroup $H$ contains a subgroup $T_i$ for some $i\geq1$ and it is clear that if $N\cap T_i$ has finite index in $T_i$, then $N$ has finite index in $H$. Hence, without loss of generality we can assume that $H=T_i$. Write $$E_1=M_i,E_2=M_{i+1}M_i,\dots,E_s=\prod_{j=i}^{i+s-1} M_j,\dots.$$ The subgroups $E_s$ are finite and we have inclusions 
$$E_1\leq E_2\leq\dots\leq\cup_{s=1}^\infty E_s=T_i.$$ Let now $s$ be the minimal index for which $N\cap E_s\neq1$ and choose a nontrivial element $x\in N\cap E_s$. Since $x\in E_s$, it follows that $x\not\in M_j$ whenever $j\geq i+s$. Notice that $M_j$ is normal in $G_{j+1}$ and $x\in G_{j+1}$. We apply Lemma \ref{lala} (c) and conclude that $M_j=[M_j,x]$ for every $j\geq i+s$. Recall that $M_j$ normalizes the subgroup $N$ and $x\in N$. It follows that $M_j\leq N$ whenever $j\geq i+s$. Therefore $T_{i+s}\leq N$. Since $T_{i+s}$ has finite index in $G$, so does $N$.

\begin{theorem}\label{kg} Let $K$ be a field. Then every ideal $I$ in the group algebra $K[G]$ has finite codimension.
\end{theorem}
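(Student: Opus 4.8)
The plan is to prove the equivalent statement that $K[G]/I$ is finite-dimensional for every nonzero ideal $I$ (the case $I=K[G]$ being trivial), and to reduce this first to a purely group-theoretic condition. Set $N=\{g\in G:\ g-1\in I\}$. A direct check shows $N$ is a normal subgroup of $G$: if $g-1,h-1\in I$ then $gh-1=g(h-1)+(g-1)\in I$ and $g^{-1}-1=-g^{-1}(g-1)\in I$, while normality follows from $xgx^{-1}-1=x(g-1)x^{-1}\in I$. Now $A=K[G]/I$ is spanned over $K$ by the images $\bar g$ of the group elements, and $\bar g=\bar h$ whenever $gh^{-1}\in N$. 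Hence, if $N\neq1$, then by the description of the normal subgroups of $G$ obtained above we have $N=T_i$ for some $i$, the images $\bar g$ factor through the finite quotient $G/T_i\cong G_i$, and therefore $\dim_K A\le |G_i|<\infty$. Thus everything reduces to the claim that every nonzero ideal $I$ contains an element $g-1$ with $g\neq1$; that is, $N\neq1$.

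To get a foothold I would record a rigidity property of the layers. Since $N$ is normal in $G$ and $M_n$ is normal in $G_{n+1}$, the intersection $N\cap M_n$ is a normal subgroup of $G_{n+1}$ contained in the unique minimal normal subgroup $M_n$; hence $N\cap M_n$ is either trivial or all of $M_n$. In the second case $m-1\in I$ for every $m\in M_n$ and we are done. Consequently it suffices to exclude the remaining possibility, namely $N\cap M_n=1$ for all $n$ (equivalently $N=1$), in which case each $M_n$ embeds into $A^\times$ and, crucially, distinct group elements have distinct images in $A$.

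The heart of the matter is to derive a contradiction from $I\neq0$ together with $N=1$. I would take a nonzero $\alpha\in I$ whose support has the smallest possible size $r$; since $N=1$ one has $r\ge2$, and after left multiplication by a group element we may assume $1\in\mathrm{supp}(\alpha)$ with $\mathrm{supp}(\alpha)\subseteq G_{n_0}$. The goal is to contradict minimality by manufacturing from $\alpha$ a nonzero element of $I$ of strictly smaller support. The essential resource is that $\mathrm{supp}(\alpha)\subseteq G_{n_0}$ acts by the regular, hence free, permutation representation on the coordinates of every higher layer $M_n$, $n\ge n_0$, so that conjugating and multiplying $\alpha$ by elements $m\in M_n$ (and by elements of the individual simple coordinate factors of $M_n$) produces new members of $I$ whose supports can be separated coordinate by coordinate. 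Organizing this through the crossed-product decomposition $K[G_{n+1}]=K[M_n]\rtimes G_n$ and an induction on $n$ and on the number of $G_n$-cosets met by the support, one aims to drive a nonzero element of $I$ into $K[M_n]$ and then into a single simple coordinate factor, where nonabelian simplicity forces some $m-1$ into $I$.

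The step I expect to be the main obstacle is precisely this support reduction. Conjugation alone does not suffice: the part of $\alpha$ fixed under conjugation by $m\in M_n$ is governed by the \emph{subgroup} of $\mathrm{supp}(\alpha)$ stabilizing $m$, so a purely conjugation-based separation fails exactly when the support elements are mutually dependent, for instance when, after translation, they fill out a coset of an elementary abelian subgroup, where every element lies in the subgroup generated by the others. Overcoming this requires genuine use of multiplication by the augmentation elements $m-1$ of the fresh layers together with Lemma \ref{lala}(c), which guarantees $[M_n,x]=M_n$ and hence a conjugation action rich enough that no ideal can remain compatible with it while keeping all layers faithful. The technical core is therefore a crossed-product computation showing that a nonzero ideal cannot simultaneously avoid the minimal normal subgroups $M_n$ of every level; once some $m-1$ with $m\neq1$ is produced, the reduction of the first paragraph finishes the proof.
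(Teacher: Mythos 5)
Your first paragraph is correct and coincides with how the paper finishes: setting $N=\{g\in G:\ g-1\in I\}$, noting $N$ is normal, and using just-infiniteness of $G$ (equivalently, the classification $N=T_i$) to get finite codimension once $N\neq1$. But the entire content of the theorem is the claim you reduce to --- that $I\neq0$ forces $N\neq1$ --- and this you do not prove. Your third and fourth paragraphs describe a strategy (minimal support, separation of support elements by conjugation with fresh-layer elements, driving an element of $I$ into $K[M_n]$ and then into a simple factor), you yourself identify the step where it breaks (conjugation cannot separate support elements that are mutually dependent), and the proposed repair is stated only as a goal (``a crossed-product computation showing that a nonzero ideal cannot simultaneously avoid the minimal normal subgroups''), not as an argument. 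Note also that the last step of your sketch is itself doubtful: having a nonzero element of $I\cap K[S]$ for a nonabelian simple $S$ does not by itself force $m-1\in I$ for some $m\neq1$; $K[S]$ has many proper ideals, and nonabelian simplicity of $S$ enters the paper's proof in a quite different way (via Lemma \ref{zok}/Lemma \ref{lala}(c), to pin down normal subgroups, not ideals).

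The paper closes this gap with a two-stage argument in the style of Bonvallet--Hartley--Passman--Smith, which differs essentially from your minimal-support plan. First, one fixes a prime $q\neq\mathrm{char}(K)$ dividing the order of infinitely many $S_i$ (this choice, which your sketch never addresses, is essential: the later averaging step divides by powers of $q$, so in characteristic $p$ one must avoid $q=p$; the existence of such $q$ uses Feit--Thompson and, in characteristic $2$, facts about Suzuki groups). Given $\alpha=\sum_0^n k_ix_i^{-1}\in I$ with $k_0\neq0$, $x_0=1$, one takes an element $y$ of order $q$ in a simple factor of a layer $M_t$ above the support; because the support permutes the factors of $M_t$ regularly, the conjugates $y^{x_ix_j}$ commute and generate an elementary abelian $q$-group $A$. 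The identity $\beta_{s-1}=z_s\beta_s-y_s^{-1}\beta_sy_s$ (with $z_s=[y_s,x_s]$) strips off one support element per step while keeping coefficients in $K[A]$, yielding the nonzero element $k_0(z_n-1)\cdots(z_1-1)\in I\cap K[A]$; this is the separation mechanism that replaces your conjugation-only attempt, and it works precisely because it multiplies by augmentation-type factors rather than merely conjugating. Second, with $0\neq\alpha_0\in I\cap K[A]$ in hand, one builds $H=BA\cong C_q\wr A$ from a fresh layer, averages $\alpha_0$ over conjugation by $B$, and uses that every $[B,a_i]$ is normal in the finite $q$-group $H$ and hence contains its cyclic center $\langle z\rangle$; the computation collapses $(z-1)\beta$ to $l_0|B|(z-1)$, and invertibility of $l_0|B|$ (here $q\neq\mathrm{char}(K)$ is used) gives $z-1\in I$, i.e.\ $N\neq1$. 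None of this machinery, nor any workable substitute for it, appears in your proposal, so the proof is incomplete at its central point.
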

\begin{proof} Let $p$ be the characteristic of the field $K$ (possibly, $p=0$). Choose a prime $q$ such that $q\neq p$ and among the groups $S_i$ there are infinitely many of order divisible by $q$. Such a prime $q$ exists for any sequence of simple groups $\{S_i\}$. Indeed, if $p\neq2$, we can take $q=2$ since by the Feit-Thompson theorem \cite{fetho} the order of each group in the sequence $\{S_i\}$ is even. Suppose $p=2$. If infinitely many of the groups $S_i$ belong to the family of the Suzuki groups, we take $q=5$ since the orders of the Suzuki groups are divisible by 5. If only finitely many of the groups $S_i$ are Suzuki groups, we take $q=3$ since the Suzuki groups are the only simple groups whose orders are not divisible by $3$ \cite{simple}.

Let now $I$ be a nonzero ideal in $K[G]$. We will show that $I$ has finite codimension. Our proof will mimic some arguments from \cite{hartley}.

Let $0\neq\alpha\in I$. Then we can assume that 1 occurs in the support
of $\alpha$ and we write $\alpha=\sum_0^n k_ix_i^{-1}$ with $1=x_0,x_1,\dots,x_n$ distinct elements of $G$ and $k_0\neq0$. Let $u$ be the minimal index such that $x_0,x_1,\dots,x_n\in G_u$. Set $X=\langle x_0,x_1,\dots,x_n\rangle$ and remark that $X$ regularly permutes the simple factors in $M_t$ for any $t>u$. Choose $t$ such that $q\in\pi(M_t)$ and choose an element $y$ of order $q$ that belongs to a simple factor of $M_t$. For $i=0,1,\dots,n$ set $y_i=y^{x_i}$ and $z_i=[y_i,x_i]$. Denote the subgroup $\langle y_i^{x_j};0\leq i,j\leq n\rangle$ by $A$.

We show now by inverse induction on $s$ with $n\geq s\geq0$ that $I$ contains an element $$\beta_s=\sum_0^s\beta_{si}x_i^{-1}$$ such that $\beta_{si}\in K[A]$ and, if $s<n$, we have $$\beta_{s0}=k_0(z_n-1)(z_{n-1}-1)\dots(z_{s+1}-1).$$ First for $s=n$ we merely take $\beta_s=\alpha$. Now suppose we have $\beta_s$ as above contained in $I$ with $n>s>0$. Then $y_s^{-1}\beta_sy_s$ and $z_s\beta_s$ both belong to $I$ and hence
$$\beta_{s-1}=z_s\beta_s-y_s^{-1}\beta_sy_s\in I.$$
Since $A$ is abelian and $y_s,z_s\in A$, we have
$$\beta_{s-1}=\sum_0^s z_s\beta_{si}x_i^{-1}-\sum_0^sy_s^{-1}\beta_{si}x_i^{-1}y_s$$ $$=\sum_0^s\beta_{si}(z_s-[y_s,x_i])x_i^{-1}=\sum_0^{s-1}\beta_{si}(z_s-[y_s,x_i])x_i^{-1}$$ since $z_s=[y_s,x_s]$. Recall that $x_0=1$. It follows that $\beta_{s-1,0}$ has the required form and the induction step is proved.

In particular, when $s=0$ we conclude that $$\beta_0=k_0(z_n-1)(z_{n-1}-1)\dots(z_1-1)\in I.$$ Here $k_0\neq0$ and, since $\langle z_1,z_2,\dots,z_n\rangle=\langle z_1\rangle\times\langle z_2\rangle\times\dots\times\langle z_n\rangle$, we conclude that $\beta_0\neq0$. Thus, we have shown that $I\cap K[A]\neq0$.

Now write $A=\{1=a_0,a_1,\dots,a_m\}$. The group $A$ regularly permutes the simple factors in $M_v$ for any $v\geq t+1$. Choose $v$ such that $q\in\pi(M_v)$ and choose an element $b$ of order $q$ that belongs to a simple factor of $M_v$. For $i=0,1,\dots,m$ set $b_i=b^{a_i}$. Denote the subgroup $\langle b_0,b_1,\dots,b_m\rangle$ by $B$. Set $H=BA$. It is clear that $H\cong C_q\wr A$, where $C_q$ denotes the cyclic group of order $q$. In particular, the center $Z(H)$ is cyclic generated by the product $b_0\cdots b_m$.

Fix $i\leq m$ and set $B_i=[B,a_i]$. Since $B$ is abelian, $B_i$ is isomorphic with $B/C_B(a_i)$. Let $\hat B_i$ denote the sum of elements in $B_i$. We compute $$\sum_{b\in B}(a_i^{-1})^b=\sum_{b\in B}[b,a_i]a_i^{-1}=|C_B(a_i)|\hat B_ia_i^{-1}.$$

Let $\alpha_0=\sum l_ia_i^{-1}\in I\cap K[A]$ with $l_0\neq0$. Then $\beta=\sum_{b\in B}b^{-1}\alpha_0b\in I$. By the above $$\beta=\sum_il_i|C_B(a_i)|\hat B_ia_i^{-1}.$$ For $i=0$, since $a_0=1$, the summand here is $l_0|B|$. For $i\neq0$ the element $a_i$ does not centralize $B$ so $B_i\neq1$. Lemma \ref{zuzu} tells us that each subgroup $B_i$ is normal in $H$. Recall that $Z(H)$ is cyclic of order $q$. Because $H$ is nilpotent, every normal subgroup of $H$ has nontrivial intersection with $Z(H)$. Consequently, $Z(H)\leq B_i$. Let $z$ be a generator of $Z(H)$. Since $z\in B_i$, we have $(z-1)\hat B_i=0$. Therefore $$(z-1)\beta=l_0|B|(z-1).$$ We put this together with the facts that $(z-1)\beta\in I$ and $l_0\neq0$. Since the characteristic of $K$ is not $q$, it follows that $z-1\in I$.

Now set $N=\{g\in G; g-1\in I\}$. This is a normal subgroup in $G$. The above shows that $N\neq1$. Since $G$ is just-infinite, we conclude that $N$ has finite index. This implies that $I$ has finite codimension.
\end{proof}

Remark. The proof of Theorem \ref{kg} uses the classification of finite simple groups in the part where the prime $q$, different from the characteristic of $K$, is chosen. We emphasize however that in the important case where $K$ has characteristic 0 (for instance, $K=\Bbb C$) the classification is not required. Further, the classification is not required for many choices of the sequence $S_1,S_2,\dots$, in particular for the case where all simple groups $S_i$ are isomorphic.
\medskip

The main result of the present article can now be easily obtained by combining Theorem \ref{kg} with results from \cite{gri3}. We denote by
$\Bbb C$ the field of complex numbers.
\begin{theorem}\label{gla} There exists an infinite, residually finite, locally finite group $G$ such that $C^*(G)$ is just-infinite
\end{theorem}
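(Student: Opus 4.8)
The plan is to exploit the local finiteness of $G$ to realize $C^*(G)$ as an approximately finite-dimensional algebra and then to reduce the finite codimension of its closed ideals to the purely algebraic statement already proved in Theorem \ref{kg}. Take $G$ to be a group constructed in Section 2 for an arbitrary sequence $S_1,S_2,\dots$ of nonabelian finite simple groups; it was shown there that $G$ is infinite, residually finite and locally finite, so that $C^*(G)=C^*_r(G)$ by amenability. Since $G=\bigcup_i G_i$ is the increasing union of the finite subgroups $G_i$, the complex group algebra satisfies $\Bbb C[G]=\bigcup_i\Bbb C[G_i]$, each $\Bbb C[G_i]$ is a finite-dimensional $C^*$-subalgebra of $C^*(G)$ (isomorphic to the full $C^*$-algebra $C^*(G_i)\cong\bigoplus_j M_{d_j}(\Bbb C)$ since a finite-dimensional $C^*$-algebra carries a unique $C^*$-norm), and therefore $C^*(G)=\overline{\bigcup_i\Bbb C[G_i]}$ is an AF-algebra. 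As $G$ is infinite, $C^*(G)$ is infinite-dimensional, so to prove just-infiniteness it remains only to show that $C^*(G)/J$ is finite-dimensional for every nonzero closed two-sided ideal $J$.

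The first main step is to pass from the $C^*$-ideal $J$ to an honest ideal of $\Bbb C[G]$. For this I would invoke the ideal theory of AF-algebras, as used in \cite{gri3}: every closed two-sided ideal $J$ of $C^*(G)$ is the closure of the union of the ideals $J\cap\Bbb C[G_i]$, that is, $J=\overline{\bigcup_i(J\cap\Bbb C[G_i])}$. The quickest self-contained way to see this is to fix $x\in J$ and $\varepsilon>0$, approximate $x$ by some $a\in\Bbb C[G_i]$ with $\|x-a\|<\varepsilon$, and then split $a=a'+a''$ according to the block decomposition of the finite-dimensional $C^*$-algebra $\Bbb C[G_i]$, where $a'$ is the part lying in the ideal $J\cap\Bbb C[G_i]$ and $a''$ is the part in the complementary blocks. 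Writing $q\colon C^*(G)\to C^*(G)/J$ for the quotient map, the map $q$ is isometric on the complementary blocks (being an injective $*$-homomorphism there), so $\|a''\|=\|q(a'')\|=\|q(a)\|\le\|a-x\|<\varepsilon$, whence $\|x-a'\|\le\|x-a\|+\|a''\|<2\varepsilon$ with $a'\in J\cap\Bbb C[G_i]$. In particular, if $J\neq0$ then $J\cap\Bbb C[G]\neq0$.

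It then remains to combine this with Theorem \ref{kg} and a routine density argument. Applying Theorem \ref{kg} with $K=\Bbb C$ to the nonzero ideal $J\cap\Bbb C[G]$ of $\Bbb C[G]$, we conclude that $J\cap\Bbb C[G]$ has finite codimension in $\Bbb C[G]$. Hence the image $q(\Bbb C[G])\cong\Bbb C[G]/(J\cap\Bbb C[G])$ is finite-dimensional. Since $\Bbb C[G]$ is dense in $C^*(G)$, its image $q(\Bbb C[G])$ is dense in $C^*(G)/J$; but a finite-dimensional subspace of a Banach space is closed, so $q(\Bbb C[G])=C^*(G)/J$, and therefore $C^*(G)/J$ is finite-dimensional. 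This shows that every proper quotient of $C^*(G)$ is finite-dimensional, so $C^*(G)$ is just-infinite.

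I expect the only genuine obstacle to be the first step, namely the assertion that a nonzero closed ideal of the AF-algebra $C^*(G)$ meets the dense subalgebra $\Bbb C[G]$ nontrivially; everything else is either recorded in Theorem \ref{kg} or is a soft functional-analytic argument. Note that, since here $K=\Bbb C$, the remark following Theorem \ref{kg} also frees the argument from any appeal to the classification of finite simple groups. Finally, because the construction of Section 2 depends on the freely chosen sequence $\{S_i\}$ and distinct sequences yield non-isomorphic groups $G$, this in fact produces the uncountable family promised in the introduction.
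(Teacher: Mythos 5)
Your proof is correct and follows essentially the same route as the paper: take the group $G$ constructed in Section 2, apply Theorem \ref{kg} with $K=\Bbb C$ to make $\Bbb C[G]$ just-infinite, and transfer just-infiniteness from the dense locally finite-dimensional subalgebra $\Bbb C[G]$ to the AF algebra $C^*(G)$. The only difference is that the paper compresses the transfer step into a citation of Propositions 6.2 and 6.6 of \cite{gri3}, whereas you prove exactly what those propositions supply — the Bratteli-style lemma that a nonzero closed two-sided ideal $J$ of the AF algebra satisfies $J=\overline{\bigcup_i\bigl(J\cap\Bbb C[G_i]\bigr)}$ (your block-decomposition and isometry argument is right), followed by the density/closedness-of-finite-dimensional-subspaces argument — so your version is a correct, self-contained substitute for the citations; only your closing aside that distinct sequences $\{S_i\}$ give non-isomorphic groups is left unproved, but it is not needed for the statement, which asks only for existence.
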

\begin{proof} Let $G$ be as above. By Theorem \ref{kg}, the group algebra $\Bbb C[G]$ is just-infinite. The result now follows immediately from Propositions 6.2 and 6.6 of \cite{gri3}.
\end{proof}

\section{On groups of finite exponent}

In the present section we will show that residually finite groups of finite exponent are never just-infinite.

\begin{lemma}\label{pp} Let $p$ be a prime and $G$ a locally finite $p$-group such that $G/G'$ is finite. Assume that $G$ is residually finite. Then $G$ is  finite.
\end{lemma}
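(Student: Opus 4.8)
The plan is to avoid analyzing the derived series of $G$ directly (an approach that is essentially doomed, since without the residual finiteness hypothesis there do exist infinite locally finite $p$-groups with $G/G'$ finite, such as the locally dihedral $2$-group $C_{2^\infty}\rtimes C_2$ or the Heineken--Mohamed groups) and instead to bound the orders of \emph{all} finite quotients of $G$ simultaneously by a single constant, after which residual finiteness finishes the argument at once.

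First I would use local finiteness to replace the quotient $G/G'$ by a genuine finite subgroup. Since $G/G'$ is a finite abelian $p$-group, it is generated by the images of finitely many elements $g_1,\dots,g_k\in G$. Set $F=\langle g_1,\dots,g_k\rangle$. Because $G$ is locally finite, $F$ is a finite $p$-group, and by construction $G=FG'$.

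The key step, which I expect to be the heart of the proof, is a uniform bound on finite quotients via a Frattini-type argument. Let $\pi\colon G\to Q$ be any finite quotient. Every element of $Q$ has $p$-power order, so by Cauchy's theorem $Q$ is a finite $p$-group. Applying $\pi$ to $G=FG'$ gives $Q=\pi(F)Q'$. Since $Q'\le\Phi(Q)=Q'Q^p$, this yields $\pi(F)\Phi(Q)=Q$, and the Burnside basis theorem (the elements of $\Phi(Q)$ are non-generators) forces $\pi(F)=Q$. Hence $|Q|=|\pi(F)|\le|F|$, so \emph{every} finite quotient of $G$ has order at most $|F|$. It is exactly here that the $p$-group hypothesis is essential: it is what converts the weak statement ``$F$ generates $G$ modulo $G'$'' into the strong statement ``the image of $F$ is all of $Q$'' in each finite quotient.

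Finally I would close using residual finiteness. Among all normal subgroups of finite index choose $N_0$ with $|G/N_0|$ maximal; such a maximum exists because all these indices are bounded by $|F|$. If $N_0\ne1$, pick $1\ne x\in N_0$ and, by residual finiteness, a finite-index normal subgroup $M$ with $x\notin M$. Then $N_0\cap M\subsetneq N_0$ has finite index, so $G/(N_0\cap M)$ is a finite quotient of order strictly greater than $|G/N_0|$, contradicting maximality. Therefore $N_0=1$ and $G=G/N_0$ is finite. The sole genuine use of residual finiteness is in this last paragraph, which is precisely what excludes the infinite examples noted above.
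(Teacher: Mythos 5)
Your proposal is correct and follows essentially the same route as the paper: both pass to a finite subgroup $F$ (the paper's $H$) with $G=FG'$ via local finiteness, use the Frattini argument $Q'\le\Phi(Q)$ in each finite quotient $Q$ to conclude that $Q$ is an image of $F$, and then invoke residual finiteness to bound $G$. The only difference is that you spell out details the paper leaves implicit (the Cauchy argument that $Q$ is a $p$-group, and the maximal-index argument closing the proof), which is fine.
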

\begin{proof} Since $G/G'$ is finite, we can choose finitely many elements $a_1,\dots,a_k$ such that $G=\langle a_1,\dots,a_k\rangle G'$. Set $H=\langle a_1,\dots,a_k\rangle$. Since in any finite quotient $Q$ of $G$ the derived group $Q'$ is contained in the Frattini subgroup $\Phi(Q)$, we conclude that $Q$ is isomorphic to a finite quotient of $H$. The lemma now follows from the fact that $H$ is finite.
\end{proof}

\begin{lemma}\label{ttt} Let $n$ be a positive integer and $G$ an infinite group that is residually of order at most $n$. Then $G$ has a proper infinite quotient.
\end{lemma}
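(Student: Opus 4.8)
The plan is to argue by contradiction. Recall that $G$ being residually of order at most $n$ means that for every $g\neq1$ there is a normal subgroup $N\leq G$ with $g\notin N$ and $|G/N|\le n$. Since $G$ is infinite, the conclusion that $G$ has a proper infinite quotient is equivalent to the assertion that $G$ is not just-infinite, i.e.\ that $G$ possesses a nontrivial normal subgroup of infinite index. So I would assume, for contradiction, that every nontrivial normal subgroup of $G$ has finite index, and aim to deduce that $G$ is finite. The first step is a reduction to a subdirect product of a single finite group. Let $\mathcal F$ be the (finite) set of isomorphism types of groups of order at most $n$ and put $P=\prod_{Q\in\mathcal F}Q$, a fixed finite group. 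Every group of order at most $n$ embeds into $P$ as a direct factor, so the residual hypothesis gives $\bigcap_{\phi}\ker\phi=1$, the intersection being over all $\phi\in J:=\mathrm{Hom}(G,P)$. Hence $g\mapsto(\phi(g))_{\phi\in J}$ embeds $G$ into $P^{J}$; write $N_\phi=\ker\phi$ for the coordinate kernels. Note that $J$ must be infinite, since otherwise $G$ would embed in the finite group $P^{J}$.

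The key observation is that the standing assumption forces each nontrivial element of $G$ to lie in only finitely many of the $N_\phi$. Indeed, fix $g\neq1$ and consider the normal closure $\langle g\rangle^{G}$; being a nontrivial normal subgroup, it has finite index by assumption. Any $\phi$ with $g\in N_\phi$ satisfies $\ker\phi\supseteq\langle g\rangle^{G}$ and so factors through the finite group $G/\langle g\rangle^{G}$; as $\mathrm{Hom}(G/\langle g\rangle^{G},P)$ is finite, only finitely many such $\phi$ exist. Thus $\{\phi\in J:g\in N_\phi\}$ is finite. Equivalently, for any two distinct elements $g,h\in G$ the coincidence set $\{\phi\in J:\phi(g)=\phi(h)\}=\{\phi:g^{-1}h\in N_\phi\}$ is finite.

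The contradiction now comes from a pigeonhole count. Suppose $G$ contained $|P|+1$ distinct elements $g_0,\dots,g_{|P|}$. For each pair $a\neq b$ the set $E_{ab}=\{\phi:\phi(g_a)=\phi(g_b)\}$ is finite by the previous paragraph, hence so is $E=\bigcup_{a<b}E_{ab}$. Since $J$ is infinite there is some $\phi\notin E$, and for that $\phi$ the images $\phi(g_0),\dots,\phi(g_{|P|})$ are pairwise distinct, i.e.\ $|P|+1$ distinct elements of $P$, which is absurd. Hence $|G|\le|P|$ and $G$ is finite, contradicting the hypothesis; this proves that $G$ has a proper infinite quotient. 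I expect the main obstacle to be precisely the step of the second paragraph: one must rule out the ``spread-out'' configuration in which every nonidentity element is detected by almost all coordinates, and it is the interplay of just-infiniteness (each normal closure has finite index) with the finiteness of $P$ that collapses this configuration, after which the counting argument is routine.
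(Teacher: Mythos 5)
Your proof is correct, but it takes a recognizably different route from the paper's. The paper argues directly with the family $\mathcal F$ of all normal subgroups of index at most $n$: fixing $n+1$ distinct elements $g_1,\dots,g_{n+1}$, pigeonhole on cosets shows that every $N\in\mathcal F$ contains some $g_ig_j^{-1}$, so $\mathcal F$ is the union of the finitely many subfamilies $\mathcal F_{ij}=\{N\in\mathcal F:\ g_ig_j^{-1}\in N\}$; each intersection $N_{ij}=\bigcap_{N\in\mathcal F_{ij}}N$ contains the nontrivial element $g_ig_j^{-1}$, hence has finite index by the just-infiniteness assumption, and then $1=\bigcap_{N\in\mathcal F}N=\bigcap_{i\neq j}N_{ij}$ is a finite intersection of finite-index subgroups, forcing $G$ to be finite --- a contradiction. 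You dualize this: instead of the subgroups themselves you work with the set $J$ of homomorphisms into one fixed finite group $P$, prove that each nontrivial element is killed by only finitely many $\phi\in J$ (just-infiniteness applied to the normal closure $\langle g\rangle^G$, plus finiteness of $\mathrm{Hom}$ between finite groups), and then use the infinitude of $J$ to produce a single $\phi$ separating $|P|+1$ elements of $G$, which is absurd. Both proofs rest on the same two pillars --- the pigeonhole bound coming from $n$, and just-infiniteness applied to normal closures of nontrivial elements --- but your implementation trades the paper's closing step (a finite intersection of finite-index subgroups has finite index) for the Hom-set counting and the auxiliary embedding of every group of order at most $n$ into $P$ (Cayley's theorem with $P=S_n$ would have served equally well and more cheaply). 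The paper's version is leaner for exactly this reason, needing no auxiliary group at all; yours makes the ``separation of points'' picture explicit and, under the contradiction hypothesis, even yields the quantitative bound $|G|\leq|P|$.
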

\begin{proof} Suppose that the lemma is false and $G$ is just-infinite. Denote by $\mathcal F$ the family of all normal subgroups of $G$ having index at most $n$. Since $G$ is infinite, we can choose $n+1$ pairwise distinct elements $g_1,\dots,g_{n+1}\in G$. For all $i,j\leq n+1$ denote by ${\mathcal F}_{ij}$ the family of all subgroups $N\in\mathcal F$ such that $g_ig_j^{-1}\in N$. We see that $\mathcal F=\bigcup_{i\neq j}\mathcal F_{ij}$. Indeed, let $N\in\mathcal F$. Since the index of $N$ in $G$ is at most $n$, it follows that $Ng_i=Ng_j$ for some $i\neq j$. This means that $N\in {\mathcal F}_{ij}$. We further remark that all families ${\mathcal F}_{ij}$ are non-empty since $G\in{\mathcal F}_{ij}$ for any $i,j$.

For each pair of indexes $i\neq j$ denote by $N_{ij}$ the intersection of all subgroups in ${\mathcal F}_{ij}$. Then $N_{ij}\neq1$ because $g_ig_j^{-1}\in N_{ij}$. Taking into account that $G$ is just-infinite we deduce that all $N_{ij}$ have finite index. Obviously, $$\bigcap_{i\neq j}N_{ij}=\bigcap_{N\in\mathcal F}N=1$$ and we conclude that $G$ is finite. This is a contradiction.
\end{proof}

We are now ready to establish the main result of this section.

\begin{theorem}\label{expo} Every infinite residually finite group of finite exponent has a proper infinite quotient.
\end{theorem}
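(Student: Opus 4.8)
The plan is to argue by contradiction: assume $G$ is infinite, residually finite, of exponent $n$, and just-infinite, i.e.\ has no proper infinite quotient. The first step is to reduce to the locally finite case. Every finitely generated subgroup $F\le G$ is residually finite and of exponent dividing $n$, so by the positive solution of the restricted Burnside problem its finite quotients have order bounded in terms of $n$ and the number of generators of $F$; since $F$ is finitely generated it has only finitely many subgroups of each finite index, and their intersection is trivial by residual finiteness, so $F$ is finite. Thus $G$ is locally finite, every nontrivial normal subgroup has finite index, and the intersection of all finite-index normal subgroups is trivial; in particular the finite quotients of $G$ are finite groups of exponent dividing $n$, so only finitely many primes occur.

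The principal case division I would make is on the Hirsch--Plotkin radical $R$ of $G$. Suppose first $R\ne 1$. Then $R$ is a nontrivial normal subgroup, hence of finite index, and as a locally nilpotent torsion group of finite exponent it is the direct product of its finitely many $p$-components, each of which is characteristic in $R$ and therefore normal in $G$. Since $R$ is infinite, some component $R_p$ is infinite; set $P=R_p$. Being nontrivial and normal, $P$ has finite index, so $P$ is an infinite, locally finite, residually finite normal $p$-subgroup. If $P$ is nonabelian, then $P'\ne 1$ is characteristic in $P$, hence normal in $G$, hence of finite index, so $P/P'$ is finite and Lemma \ref{pp} forces $P$ finite, a contradiction. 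If $P$ is abelian, then $G$ is virtually abelian; passing to the $G$-core of a cyclic quotient of a finite-index abelian normal subgroup shows that $G$ is residually of bounded order, and Lemma \ref{ttt} then produces a proper infinite quotient, again a contradiction.

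It remains to treat the case $R=1$, which I expect to contain the main obstacle. Now $G$ has no nontrivial abelian normal subgroup, so any minimal normal subgroup is a direct product of isomorphic nonabelian finite simple groups. If $G$ does have a minimal normal subgroup $A$, the argument is clean: $A$ is nontrivial and normal, hence of finite index, so $G/A$ is finite; but $G$ permutes the simple factors of $A$ through the finite group $G/A$, so the orbits are finite, and minimality forces a single orbit, whence $A$ has finitely many factors and is finite, making $G$ finite. The genuine difficulty is the possibility that $G$ has no minimal normal subgroup, the branch-type situation in the trichotomy \cite{gri1}, giving an infinite strictly descending chain of finite-index normal subgroups with trivial intersection. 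Here I would combine two facts: that an infinite, locally finite, simple group has infinite exponent by the classification of locally finite simple groups, which disposes of the virtually-$L^{k}$ alternative with $L$ simple; and that for fixed $n$ only finitely many finite simple groups have exponent dividing $n$, so the nonabelian chief factors of $G$ are built from a fixed finite list. The plan is to feed these into the rigid-stabilizer structure of a branch group to locate a finite-index subgroup of finite abelianization and thereby reduce, once more, to Lemma \ref{pp}; making this last reduction precise is the crux of the argument.
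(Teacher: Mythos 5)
Your reduction to local finiteness and your handling of the case of a nontrivial Hirsch--Plotkin radical $R$ are sound, and they parallel the paper's two lemmas: the nonabelian $p$-subgroup subcase is exactly an application of Lemma \ref{pp}, and the virtually abelian subcase can indeed be finished via Lemma \ref{ttt} (or, more simply, by noting that every element of an infinite abelian normal subgroup of finite index has finite normal closure, contradicting just-infiniteness). The genuine gap is the case $R=1$ without a minimal normal subgroup, which you correctly flag as the crux but do not prove. Moreover, the plan you sketch for it cannot work as stated: Lemma \ref{pp} applies only to locally finite \emph{$p$-groups}, and when the Hirsch--Plotkin radical is trivial $G$ has no nontrivial normal $p$-subgroups whatsoever, so ``locating a finite-index subgroup of finite abelianization'' would not place you within its hypotheses. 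There are two further holes in this case: (i) identifying ``no minimal normal subgroup'' with ``branch type'' is incorrect --- in the trichotomy of \cite{gri1} the alternative of a finite-index subgroup that is a direct power of a \emph{hereditarily just-infinite} group (a class which does contain locally finite groups, e.g.\ those constructed in Section 2 of this paper) also typically has no minimal normal subgroup, and your rigid-stabilizer plan says nothing about it; (ii) your claim that a minimal normal subgroup of a locally finite group with $R=1$ is a direct product of isomorphic simple groups needs justification, since characteristically simple locally finite groups are not in general direct products of simple groups.

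The missing idea, which is how the paper bypasses the entire chief-factor/branch analysis, is Wilson's theorem on compact torsion groups \cite{wilson}: the profinite completion $\hat G$ has a \emph{finite} normal series whose factors are pro-$p$ groups or Cartesian products of isomorphic finite simple groups. Intersecting that series with $G$, just-infiniteness forces the first term meeting $G$ nontrivially to meet it in a normal subgroup $K$ of finite index, and $K$ is then either a $p$-group (killed by Lemma \ref{pp} and the abelian observation above) or embeds in a Cartesian product of copies of a single finite simple group $S$, whence $G$ is residually of order at most a fixed bound and Lemma \ref{ttt} yields a proper infinite quotient. This needs no minimal normal subgroups inside $G$, no trichotomy, no classification of locally finite simple groups, and no finiteness statement about simple groups of exponent dividing $n$. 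To complete your route you would have to supply, from scratch, an argument covering both the branch-type and the hereditarily just-infinite alternatives; the profinite-completion argument is the efficient substitute for both.
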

\begin{proof} Let $G$ be an infinite residually finite group of finite exponent and assume that $G$ is just-infinite. Recall that by the solution of the restricted Burnside problem $G$ is locally finite \cite{ze1,ze2}. Let $\hat G$ be the profinite completion of $G$. We view $G$ as a subgroup of $\hat G$. By Wilson's theorem \cite{wilson} $\hat G$ possesses a normal series all of whose factors are either $p$-groups or Cartesian products of isomorphic non-abelian finite simple groups.

Since $G$ is just-infinite, the first term of the series that intersects $G$ nontrivially must intersect it by a subgroup of finite index, say $K$. Suppose first that $K$ is a $p$-subgroup. Since $K$ is infinite, Lemma \ref{pp} implies that $K'$ has infinite index in $G$. Therefore $K'=1$ and $K$ is abelian. But then it is clear that every element of $K$ is contained in a finite normal subgroup of $G$. This contradicts the hypothesis that $G$ is just-infinite. Therefore we will assume that $K$ embeds into a Cartesian product of isomorphic finite simple groups. However now it follows that there exists a positive integer $n$ such that $G$ is residually of order at most $n$. By Lemma \ref{ttt}, $G$ has a proper infinite quotient.
\end{proof}

\end{document}